\newtheorem{thm}{Theorem}
\newtheorem{prop}[thm]{Proposition}
\newtheorem{cor}[thm]{Corollary}
\newtheorem{lem}[thm]{Lemma}
\newtheorem{defn}[thm]{Definition}
\newtheorem{rem}[thm]{Remark}
\begin{document}

\title{On two upper bounds for hypersurfaces involving a Thas' invariant}

\author{Andrea Luigi Tironi}

\date{\today}

\address{
Departamento de Matem\'atica,
Universidad de Concepci\'on,
Casilla 160-C,
Concepci\'on, Chile}
\email{atironi@udec.cl}

\subjclass[2010]{Primary: 14J70, 11G25; Secondary: 05B25. Key words and phrases: hypersurfaces, finite fields, number of rational points.}
\thanks{During the preparation of this paper, the author was partially supported 
by Proyecto VRID N. 214.013.039-1.OIN and the Project Anillo ACT 1415 PIA CONICYT}

\maketitle

\begin{abstract}
Let $X^n$ be a hypersurface in $\mathbb{P}^{n+1}$ with $n\geq 1$
defined over a finite field $\mathbb{F}_q$ of $q$ elements. In this note, we classify, up to projective equivalence, hypersurfaces $X^n$ as above which reach two elementary upper bounds for the number of $\mathbb{F}_q$-points on $X^n$ which involve a Thas' invariant.
\end{abstract}

\section{Introduction}\label{1}

Let $\mathbb{F}_q$ be a field of $q$ elements, where $q=p^r$ for some prime $p$ and some positive integer $r$, and let $X^n$ be a hypersurface in $\mathbb{P}^{n+1}$ defined over $\mathbb{F}_q$ of degree $d\geq 2$ and dimension $n\geq 1$. 
Several years ago, Thas defined in \cite{Th} an invariant $k_{X^n}$ of $X^n$, that is, the maximum dimension $k_{X^n}$ of an $\mathbb{F}_q$-linear subspace of $\mathbb{P}^{n+1}$ which is contained in $X^n$, and obtained an upper bound for the number $N_q(X)$ of $\mathbb{F}_q$-points of $X^n$ which involved this invariant $k_{X^n}$. Recently, Homma and Kim established the following elementary upper bound involving $k_{X^n}$ (cf. \cite[Theorem 3.2]{HK7}),
\begin{equation}\label{*}
N_q(X^n)\leq (d-1)q^{k_{X^n}}N_q(\mathbb{P}^{n-k_{X^n}})+N_q(\mathbb{P}^{k_{X^n}})\ ,
\end{equation}
which works well for $k_{X^n}>0$. Moreover, they proved that \eqref{*}
is better than Thas' upper bound (see, \cite[$\S 7.1$]{HK7}). Finally, 
in \cite{HK7} the authors gave 
the complete list of nonsingular hypersurfaces $X^n$ in $\mathbb{P}^{n+1}$ with $n$ even which reach the equality in
\eqref{*} for $k_{X^n}=\frac{n}{2}$ (see \cite[Theorem 4.1]{HK7}).

The main purpose of this article is to re-prove in an easy way the Homma-Kim's elementary upper bound \eqref{*} for $k_{X^n}>0$, extending this also to the case $k_{X^n}=0$, and to give a complete list of hypersurfaces $X^n$ in $\mathbb{P}^{n+1}$ which reach this bound, independently of the parity of $n$ and the singularities of $X^n$. In particular, observe that $k_{X^n}\leq n$ and note that 
the right hand of the inequality in \eqref{*} increases with $k_{X^n}$.
Thus, the upper bound 
in \eqref{*} reduces to the Segre-Serre-S{\o}rensen's upper bound (\cite{Seg}, \cite{Ser} and \cite{Sor}) for the general case $k_{X^n}\leq n$, and it becomes the Homma-Kim's elementary bound proved in \cite{HK4} for hypersurfaces $X^n$ which does not admit $\mathbb{F}_q$-linear components, that is, when $k_{X^n}\leq n-1$. Furthermore, in both of the above cases, a complete list of hypersurfaces $X^n$ in $\mathbb{P}^{n+1}$ 
achieving the upper bound in \eqref{*}
with $k_{X^n}=n,n-1$ is given in \cite{Ser} and \cite{T}, respectively.

Therefore, keeping in mind the two above cases, for $0< k_{X^n}\leq n$  we obtain the following classification result.

\begin{thm}[Cases $0<k_{X^n}\leq n$]\label{thm1}
Let $X^n\subset\mathbb{P}^{n+1}$ be a hypersurface of degree $d\geq 2$ and dimension $n\geq 1$ defined over $\mathbb{F}_q$. Define
$$k_{X^n}:=\max\left\{h\ |\ \mathrm{there\ exists\ an\ } \mathbb{F}_q\mathrm{-linear\ space\ } \mathbb{P}^h\subseteq X\right\}$$
and suppose that $0<k_{X^n}\leq n$. Then
$$N_q(X^n)\leq (d-1)q^{k_{X^n}}N_q(\mathbb{P}^{n-k_{X^n}})+N_q(\mathbb{P}^{k_{X^n}})$$
and equality holds if and only if one of the following possibilities occurs:
\begin{enumerate}
\item[(I)] $k_{X^n}=n$ and $X^n$ is a union of $d$ hyperplanes over $\mathbb{F}_q$ that contain a common $\mathbb{F}_q$-linear subspace of codimension $2$ in $\mathbb{P}^{n+1}$;
\item[(II)] $0<k_{X^n}\leq n-1$ and one of the following cases can occur:
\begin{enumerate}
\item[$(1)$] $d=q+1$ and $X^n$ is a space-filling hypersurface
$$(X_0,\ldots,X_{n+1})\ A\ {}^t \! (X_0^q,\ldots,X_{n+1}^q)=0,$$ where $A=\left( a_{ij}\right)_{i,j=1,\ldots,n+2}$ is an
$(n+2)\times (n+2)$ matrix such that ${}^t \! A=-A$ and $a_{kk}=0$ for every $k=1,\ldots,n+2$; moreover, $X^n$ is nonsingular if and only if $\det A\neq 0 ;$
in particular, if $n$ is odd, then $X^n$ is singular\ ;
\item[$(2)$]
$d=\sqrt{q}+1$ and 
\begin{itemize}
\item[(a)] $n=2h$ with $h\in\mathbb{Z}_{\geq 1}$, $1\leq k_{X^{2h}}\leq 2h-1$ and one of the following two cases holds:
\begin{enumerate}
\item[(i)] if $\mathrm{Sing}(X^{2h})(\mathbb{F}_q)=\emptyset$, then $k_{X^{2h}}=h$ and $X^{2h}$ is projectively equivalent to a 
nonsingular Hermitian hypersurface;
\item[(ii)] if $\mathrm{Sing}(X^{2h})(\mathbb{F}_q)\neq\emptyset$, then $h\geq 2$ and, up to projective equivalence, we have
\abovedisplayskip=0pt\relax
\[
X^{2h} =
\begin{cases}
\mathbb{P}^1*X_H^{2h-2}\ , & k_{X^{2h}}=h+1 \\
\mathbb{P}^3*X_H^{2h-4}\ , & k_{X^{2h}}=h+2 \\
\dots & \\
\mathbb{P}^{2h-3}*X_H^{2}\ , & k_{X^{2h}}=h+(h-1)\ ; \\
\end{cases}
\]
\end{enumerate}
\item[(b)] $n=2h+1$ with $h\in\mathbb{Z}_{\geq 1}$, $1\leq k_{X^{2h+1}}\leq 2h$
and, up to projective equivalence, we have
\abovedisplayskip=0pt\relax
\[
X^{2h+1} = 
\begin{cases}
\mathbb{P}^0*X_H^{2h}\ , & k_{X^{2h+1}}=h+1 \\
\mathbb{P}^2*X_H^{2h-2}\ , & k_{X^{2h+1}}=h+2 \\
\mathbb{P}^4*X_H^{2h-4}\ , & k_{X^{2h+1}}=h+3 \\
\dots & \\
\mathbb{P}^{2h-2}*X_H^{2}\ , & k_{X^{2h+1}}=h+h\ , \\
\end{cases}
\]
\end{itemize}
where $\mathbb{P}^l*X_H^{m}\subset\mathbb{P}^{m+l+2}$ is a cone over a nonsingular Hermitian 
$\mathbb{F}_q$-hypersurface $X_H^{m}\subset\mathbb{P}^{m+1}$ of dimension $m$ with vertex an $\mathbb{F}_q$-linear subspace $\mathbb{P}^l$\ ;

\item[$(3)$]
$d=2$, $k_{X^n}=\frac{n+h+1}{2}\in\mathbb{Z}_{>0}$ and $X^n$ is projectively equivalent to a cone $\mathbb{P}^h*Q^{n-h-1}\subset\mathbb{P}^{n+1}$ with vertex an
$\mathbb{F}_q$-linear subspace $\mathbb{P}^h$ with $-1\leq h\leq n-1$, where $Q^{n-h-1}\subset \mathbb{P}^{n-h}$ is the hyperbolic quadric hypersurface 
$$X_0X_1+X_2X_3+ \dots + X_{n-h-1}X_{n-h}=0\ .$$
\end{enumerate}
\end{enumerate}
\end{thm}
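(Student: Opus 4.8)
Fix a maximal $\mathbb{F}_q$-linear subspace $\Lambda=\mathbb{P}^{k}\subseteq X^n$ with $k=k_{X^n}\ge 1$, and consider the linear projection $\pi\colon\mathbb{P}^{n+1}\setminus\Lambda\to\mathbb{P}^{n-k}$ away from $\Lambda$. For each $\mathbb{F}_q$-point $y$ of $\mathbb{P}^{n-k}$ the closure of the fiber is a $(k+1)$-plane $\Pi_y\supseteq\Lambda$, and by maximality of $\Lambda$ we have $\Pi_y\not\subseteq X^n$; hence the defining form of $X^n$ restricted to $\Pi_y$ is divisible by the linear form cutting out $\Lambda$, so $X^n\cap\Pi_y=\Lambda\cup Y_y$ with $\deg Y_y\le d-1$. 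On the affine chart $\Pi_y\setminus\Lambda\cong\mathbb{A}^{k+1}$ the residual $Y_y$ has at most $(d-1)q^{k}$ points by the elementary Schwartz--Zippel (Ore) bound. Summing over the $N_q(\mathbb{P}^{n-k})$ rational fibers and adding the $N_q(\mathbb{P}^{k})$ points of $\Lambda$ reproves the bound, and shows that equality holds \emph{if and only if} every rational fiber attains the affine maximum, i.e. $\#\big((X^n\cap\Pi_y)\setminus\Lambda\big)=(d-1)q^{k}$ for all $y\in\mathbb{P}^{n-k}(\mathbb{F}_q)$.

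\textbf{Step 2: the degree is forced.} Since $(d-1)q^{k}\le q^{k+1}$ is needed, equality requires $d\le q+1$. More importantly, the fiberwise condition says that in each $\Pi_y\cong\mathbb{P}^{k+1}$ the degree-$(d-1)$ residual $Y_y$ meets the affine chart in the maximal number $(d-1)q^{k}$ of points. I would invoke the classification of affine hypersurfaces of degree $e=d-1$ with exactly $eq^{k}$ points: such extremal configurations exist only for $e\in\{1,\sqrt q,q\}$, that is $d\in\{2,\sqrt q+1,q+1\}$, and are, respectively, a single hyperplane, a Hermitian-type cone, or the whole affine space. This both eliminates every other degree and pins down the local shape of $X^n$ along each fiber.

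\textbf{Step 3: the three regimes.} For $d=q+1$ the affine maximum equals $q^{k+1}$, so equality is equivalent to $X^n$ being space-filling of the minimal possible degree $q+1$; these hypersurfaces are exactly the ones defined by the alternating Frobenius form of Case~(II)(1), and the sub-case in which the associated pencil degenerates to $q+1$ hyperplanes through a common codimension-$2$ space is precisely Case~(I), while the non-degenerate alternating forms give Case~(II)(1), with the singularity governed by $\det A$. For $d=2$ the residual $Y_y$ is a hyperplane, and equality forces $Y_y\neq\Lambda$ for every rational fiber, i.e. no rational $(k+1)$-plane through $\Lambda$ is tangent to $X^n$ along $\Lambda$; translating this into the language of quadratic forms over $\mathbb{F}_q$ and computing the Witt index identifies $X^n$ as a cone $\mathbb{P}^h*Q^{n-h-1}$ over a hyperbolic quadric with $k_{X^n}=\tfrac{n+h+1}{2}$, which is Case~(II)(3). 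For $d=\sqrt q+1$ the extremal fibers are Hermitian cones, and I would show that the global $X^n$ is either a nonsingular Hermitian hypersurface (when $\mathrm{Sing}(X^n)(\mathbb{F}_q)=\emptyset$) or a cone $\mathbb{P}^l*X_H^m$ over a smaller nonsingular Hermitian hypersurface; bookkeeping the vertex dimension against $k_{X^n}$ yields the two lists in Case~(II)(2).

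\textbf{Main obstacle and how I would reduce it.} The hard part is the Hermitian regime $d=\sqrt q+1$ together with the cone bookkeeping: one must prove that the purely fiberwise maximality actually globalizes to a single cone over a nonsingular Hermitian base, and then match $k_{X^n}$ exactly to the vertex dimension in the lists of (II)(2)(a)(ii) and (II)(2)(b). I would handle this by induction on $n$ through the cone structure: a cone $\mathbb{P}^l*X_H^m$ attains the bound if and only if its base $X_H^m$ does, and $k_{\mathbb{P}^l*X_H^m}=l+1+k_{X_H^m}$, so the problem descends to the already-settled extremal cases $k_{X^n}\in\{n,n-1\}$ (from \cite{Ser} and \cite{T}) and the nonsingular Hermitian base. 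The delicate points are checking that the residual configurations in distinct fibers are forced to share a common Hermitian/linear structure (so that they assemble into one hypersurface rather than varying with $y$), and verifying the exact index arithmetic relating the Witt and Hermitian parameters to $k_{X^n}$.
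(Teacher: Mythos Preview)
Your Step~1 is fine and essentially coincides with the paper's Proposition~\ref{prop1}. The serious problem is Step~2. The claim that an affine hypersurface of degree $e=d-1$ in $\mathbb{A}^{k+1}$ attaining the Ore/Schwartz--Zippel maximum $eq^{k}$ forces $e\in\{1,\sqrt q,q\}$ is simply false: for \emph{every} $e\le q$ the union of $e$ distinct parallel hyperplanes achieves $eq^{k}$ affine points. Equivalently, in projective terms, the condition that every $(k{+}1)$-plane $\Pi_y\supset\Lambda$ meets $X^n$ in the Segre--Serre--S{\o}rensen maximum $dq^{k}+N_q(\mathbb{P}^{k-1})$ only says that $X^n\cap\Pi_y$ is a pencil of $d$ hyperplanes through a common $\mathbb{P}^{k-1}$; this is compatible with any $d\le q+1$ and does \emph{not} pin down the degree. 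So the trichotomy $d\in\{2,\sqrt q+1,q+1\}$ cannot be extracted at the level of $(k{+}1)$-planes.

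The paper overcomes this by climbing one dimension: it fixes a $(k{+}1)$-plane $\overline{L}$ in which the intersection splits completely, then sums over all $(k{+}2)$-planes $L\supset\overline{L}$. Since $k_{X^n}=k$, each $X^n\cap L\subset\mathbb{P}^{k+2}$ has no $\mathbb{F}_q$-linear component, so the Homma--Kim bound of \cite{HK4} applies; the global equality then forces each $X^n\cap L$ to attain that bound, and the classification of such extremal hypersurfaces in \cite{T} yields $d\in\{2,\sqrt q+1,q+1\}$. This step is the genuine input you are missing. A secondary issue: you fold Case~(I) into the $d=q+1$ discussion, but Case~(I) is the case $k_{X^n}=n$ for \emph{any} $d$ (handled directly by \cite{Ser}), whereas Case~(II)(1) concerns $d=q+1$ with $k_{X^n}\le n-1$. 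Finally, for $d=\sqrt q+1$ the paper's induction is not purely ``cone descent'': it alternates between Lemma~\ref{lemma}(3) (a singular $\mathbb{F}_q$-point makes $X^n$ a cone over a lower-dimensional extremal hypersurface) and Lemma~\ref{lemma}(4) (a tangent hyperplane section at a smooth $\mathbb{F}_q$-point is again extremal), together with the base case \cite{HK5} and the nonsingular characterization \cite[Theorem~6.3]{HK7}. Your sketch of the Hermitian regime is in the right spirit but would need these two mechanisms made explicit.
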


As to the case $k_{X^n}=0$, let us recall here that Homma obtained in \cite{H} an upper bound for hypersurfaces $X^n\subset\mathbb{P}^{n+1}$ with $n\geq 1$ without $\mathbb{F}_q$-lines which works well except for the case $n=1$ and $d=q=4$. On the other hand, his bound is better than \eqref{*} with $k_{X^n}=0$. For these reasons, we provide here another elementary upper bound for the number of $\mathbb{F}_q$-points of hypersurfaces $X^n$ in $\mathbb{P}^{n+1}$ with $k_{X^n}=0$ for any $n\geq 1$ and we characterize those $X^n$ which achieve this bound in the following result. 

\begin{thm}[Case $k_{X^n}=0$]\label{thm2}
Let $X^n\subset\mathbb{P}^{n+1}$ be a hypersurface of degree $d\geq 2$ and dimension $n\geq 1$ defined over $\mathbb{F}_q$. If $k_{X^n}=0$, then
$$N_q(X^n)\leq (d-1)q^n+(d-2)N_q(\mathbb{P}^{n-1})+1$$ 
and equality holds if and only if $d=2$ and, up to projective equivalence, either $n=1$ and $X^1:\ X_0^2+X_1^2+X_2^2=0$ is a nonsingular plane conic, or
$n=2$ and $X^2:\ f(X_0,X_1)+X_2X_3=0$ in a nonsingular elliptic surface, where $f(X_0,X_1)=\alpha X_0^2+X_0X_1+X_1^2$ is an irreducible binary quadratic form
with $\alpha\in\{ t\in\mathbb{F}_q\ |\ t+t^2+t^4+ \dots +t^{2^{r-1}}=1 \}$ if $q=2^r$ for some $r\in\mathbb{Z}_{\geq 1}$ and 
such that $1-4\alpha $ is a non-square if $q$ is odd. 
\end{thm}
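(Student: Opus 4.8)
The plan is to prove the inequality by projecting $X^n$ from a rational point and sorting the lines through it by tangency, and then to extract the equality case by slicing down to plane curves. For the bound itself, I would first dispose of the trivial situation $X^n(\mathbb{F}_q)=\emptyset$, where $N_q(X^n)=0$. Otherwise fix an $\mathbb{F}_q$-rational point $P\in X^n$. Since $k_{X^n}=0$, no line lies in $X^n$, so every $\mathbb{F}_q$-line $\ell\ni P$ meets $X^n$ in a degree-$d$ scheme; as the $N_q(\mathbb{P}^n)$ such lines partition $X^n(\mathbb{F}_q)\setminus\{P\}$, one counts the residual points line by line. The gain comes from the multiplicity at $P$: if $P$ is singular, every line meets $X^n$ at $P$ with multiplicity $\geq 2$ and carries at most $d-2$ further rational points, whence $N_q(X^n)\leq (d-2)N_q(\mathbb{P}^n)+1$, strictly smaller than the asserted bound; if $P$ is smooth, the $N_q(\mathbb{P}^{n-1})$ lines through $P$ inside the embedded tangent hyperplane $T_PX^n\cong\mathbb{P}^n$ carry at most $d-2$ further points each, while the remaining $q^n$ lines carry at most $d-1$ each, giving $N_q(X^n)\leq 1+(d-1)q^n+(d-2)N_q(\mathbb{P}^{n-1})$. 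In particular, equality forces $X^n$ to have a smooth rational point $P$ at which, moreover, every non-tangent $\mathbb{F}_q$-line meets $X^n$ in $d$ distinct rational points and every tangent $\mathbb{F}_q$-line meets it at $P$ with multiplicity exactly $2$ plus $d-2$ further distinct rational points.

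To pin down the equality case I would first show $d=2$ by reduction to plane curves. Cutting $X^n$ with a general $\mathbb{F}_q$-plane $\Pi\cong\mathbb{P}^2$ through $P$ produces a plane curve $C=X^n\cap\Pi$ of degree $d$, with $P$ smooth and with no $\mathbb{F}_q$-line component (such a line would lie in $X^n$, against $k_{X^n}=0$). For general $\Pi$ the tangent direction $T_PX^n\cap\Pi$ is a single line, so among the $q+1$ lines of $\Pi$ through $P$ exactly one is tangent to $C$; the equality conditions for $X^n$ at $P$ then restrict to these lines verbatim, yielding $N_q(C)=1+q(d-1)+(d-2)=(d-1)(q+1)$. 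Since a plane curve of degree $d\geq 3$ without $\mathbb{F}_q$-line components satisfies the Homma--Kim (Sziklai) bound $N_q(C)\leq (d-1)q+1$, with the sole exception $(d,q)=(4,4)$ where the maximum is $14<(d-1)(q+1)=15$, this is impossible unless $d=2$. Hence $d=2$.

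Finally, for $d=2$ the bound reads $N_q(X^n)=q^n+1$ and $X^n$ is a quadric with $k_{X^n}=0$. A degenerate quadric is a cone whose vertex is a nonempty $\mathbb{F}_q$-linear space, and as soon as the base carries a rational point this produces an $\mathbb{F}_q$-line in $X^n$; one checks this is incompatible with $k_{X^n}=0$ together with $N_q(X^n)=q^n+1>1$, so $X^n$ is nonsingular. A nonsingular quadric in $n+2$ variables contains no $\mathbb{F}_q$-line exactly when its Witt index is at most $1$, which over $\mathbb{F}_q$ happens only for $n+2\in\{3,4\}$: the nonsingular conic ($n=1$, Witt index $1$) and the elliptic quadric ($n=2$, minus type, Witt index $1$). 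Both realize the predicted counts $q+1$ and $q^2+1$, hence equality, while no nonsingular quadric with Witt index $\leq 1$ exists for $n\geq 3$. Reading off $\mathbb{F}_q$-normal forms gives $X_0^2+X_1^2+X_2^2=0$ in the conic case and $f(X_0,X_1)+X_2X_3=0$ with $f$ an anisotropic (equivalently irreducible) binary form in the elliptic case; the stated conditions on $\alpha$ are exactly the irreducibility criteria, namely $1-4\alpha$ a non-square for $q$ odd and $\mathrm{Tr}_{\mathbb{F}_q/\mathbb{F}_2}(\alpha)=1$ for $q=2^r$ (the Artin--Schreier condition).

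The main obstacle is the step forcing $d=2$: the elementary line-counting gives a bound that, for plane curves of degree $d\geq 3$ without $\mathbb{F}_q$-lines, is strictly weaker than the true maximum, so excluding $d\geq 3$ rests on the resolved Sziklai bound rather than on the elementary estimate alone; the care lies in making the plane-section reduction airtight (genericity of $\Pi$ so that $P$ stays smooth, so that $\deg C$ does not drop, and so that the tangency dictionary between $X^n$ and $C$ holds). The quadric classification and the characteristic-$2$ bookkeeping for $\alpha$ are then routine.
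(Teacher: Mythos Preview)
Your argument is correct, and the first half (the inequality via line-counting from a rational point, separating tangent and non-tangent lines) is exactly the paper's Proposition~\ref{prop2}. The differences appear in the equality analysis, and they are genuine but not deep.

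To force $d=2$, the paper takes a shorter path: it invokes Homma's bound \cite{H} for hypersurfaces without $\mathbb{F}_q$-lines directly on $X^n$, namely
\[
N_q(X^n)\leq (d-1)(q^n+1)+(d-2)\bigl(N_q(\mathbb{P}^{n-2})-1\bigr),
\]
and compares this with $\Theta_{n,0}^{d,q}$ to obtain $(d-2)q^{n-1}\leq 0$. Your route---slice by a generic $\mathbb{F}_q$-plane through $P$ and appeal to the Sziklai/Homma--Kim bound $(d-1)q+1$ for plane curves without $\mathbb{F}_q$-line components---reaches the same conclusion but requires the extra bookkeeping you flag (that $\Pi\not\subset T_PX^n$ so exactly one line in $\Pi$ through $P$ is tangent, that $\Pi\not\subset X^n$ so $\deg C=d$, that $P$ remains smooth on $C$). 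All of this is routine once stated, and the $(d,q)=(4,4)$ exception is handled correctly. The paper's approach is essentially the higher-dimensional analogue of yours packaged as a single citation, so the trade-off is one external result versus another.

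For the quadric step, the paper writes $X^n=\mathbb{P}^h*Q^{n-h-1}$ and matches $q^n+1$ against the explicit point-count formulas for parabolic, hyperbolic and elliptic quadrics from \cite{Hir}, solving for $(n,h)$. Your Witt-index argument is the same classification phrased structurally rather than numerically: $k_{X^n}=0$ with $N_q(X^n)>1$ rules out cones (any second rational point gives a rational line through the vertex), and a nonsingular quadric over $\mathbb{F}_q$ has Witt index $\leq 1$ only when $n+2\leq 4$, leaving the conic and the elliptic quadric. Both approaches are standard; yours is slightly more conceptual, the paper's slightly more computational.
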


Finally, in Corollary \ref{corollary} of Section \ref{sec2} we give an immediate consequence of Theorems \ref{thm1} and \ref{thm2} for the nonsingular case.

\section{Notation and preliminary results}\label{sec1}

Let $X^n\subset\mathbb{P}^{n+1}$ be a hypersurface of degree $d\geq
2$ and dimension $n\geq 1$ defined over a finite field
$\mathbb{F}_q$ of $q$ elements, with $q=p^r$ for some prime number
$p$ and an integer $r\in\mathbb{Z}_{\geq 1}$. If $Y$ is an algebraic set in $\mathbb{P}^{n+1}$
defined by equations over $\mathbb{F}_q$,
the set of $\mathbb{F}_q$-points of $Y$ is denoted by $Y(\mathbb{F}_q)$ and the cardinality of $Y(\mathbb{F}_q)$ by $N_q(Y)$. 
Moreover, if $L$ is an $\mathbb{F}_q$-linear subspace 
of $\mathbb{P}^{n+1}$, then $L^\nu$ will denote the set of all 
$\mathbb{F}_q$-linear subspaces $\mathbb{P}^{\dim L+1}\subseteq\mathbb{P}^{n+1}$ containing $L$. 
Recall that for any $N\in\mathbb{Z}_{\geq 1}$ we have
$$N_q(\mathbb{P}^{N})=q^N+q^{N-1}+\dots +q+1$$
and define $\mathbb{P}^{-1}=\emptyset$. Finally, let us denote here by $\mathbb{P}^h*Y$ with $h\in\mathbb{Z}_{\geq -1}$ the cone with vertex $\mathbb{P}^h$ over the variety $Y$.

In this section, we give some preliminary results
which will be useful in Section \ref{sec2} to prove Theorems \ref{thm1} and \ref{thm2}.

First of all, let us re-prove in an easier and immediate way the same inequality as in \cite[Theorem 3.2]{HK7}.

\begin{prop}\label{prop1}
Let $X^n\subset\mathbb{P}^{n+1}$ be a hypersurface of degree $d\geq 2$ and dimension $n\geq 1$ defined over $\mathbb{F}_q$. 
If $1\leq k_{X^n}\leq n$, then $$N_q(X^n)\leq (d-1)q^{k_{X^n}}N_q(\mathbb{P}^{n-k_{X^n}})+N_q(\mathbb{P}^{k_{X^n}})\ .$$
\end{prop}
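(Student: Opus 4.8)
The plan is to fix a maximal $\mathbb{F}_q$-linear subspace realizing $k := k_{X^n}$ and to slice $X^n$ by the pencil of $(k+1)$-planes through it, thereby reducing the global count to a single uniform affine estimate on each slice.

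First I would set up the decomposition. Fix an $\mathbb{F}_q$-linear subspace $L = \mathbb{P}^k \subseteq X^n$ attaining the maximum, and let $F$ be a defining form of degree $d$ for $X^n$. Let $L^\nu$ be the set of $\mathbb{F}_q$-linear subspaces $\Pi = \mathbb{P}^{k+1}$ with $L \subseteq \Pi \subseteq \mathbb{P}^{n+1}$. Such $\Pi$ correspond bijectively to the $\mathbb{F}_q$-points of the quotient $\mathbb{P}^{n+1}/L \cong \mathbb{P}^{n-k}$, so $|L^\nu| = N_q(\mathbb{P}^{n-k})$; moreover every $\mathbb{F}_q$-point of $\mathbb{P}^{n+1}$ outside $L$ lies on exactly one $\Pi \in L^\nu$, namely $\Pi = \langle L, P\rangle$. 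Hence the sets $(\Pi \setminus L)(\mathbb{F}_q)$, for $\Pi \in L^\nu$, partition $(\mathbb{P}^{n+1} \setminus L)(\mathbb{F}_q)$, and since $L \subseteq X^n$ I obtain
$$N_q(X^n) = N_q(\mathbb{P}^k) + \sum_{\Pi \in L^\nu} \left| (X^n \cap \Pi)(\mathbb{F}_q) \setminus L(\mathbb{F}_q)\right|.$$

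Next I would analyze a single slice $\Pi \in L^\nu$. Because $k$ is maximal, $\Pi \not\subseteq X^n$, so $F|_\Pi$ is a nonzero form of degree $d$ in the $k+2$ homogeneous coordinates of $\Pi$. Since $L \subseteq X^n$ is the hyperplane $\{\ell = 0\}$ of $\Pi$ for some linear form $\ell$, the form $F|_\Pi$ vanishes on all of $L$ (over $\overline{\mathbb{F}_q}$) and is therefore divisible by $\ell$, say $F|_\Pi = \ell \cdot G$ with $G$ a nonzero form of degree $d-1$. Thus $X^n \cap \Pi = L \cup V(G)$, and the points to be counted, those in $(X^n \cap \Pi) \setminus L$, are exactly the $\mathbb{F}_q$-points of $V(G)$ in the affine chart $\Pi \setminus L \cong \mathbb{A}^{k+1}$. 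Dehomogenizing $G$ with respect to $\ell$ produces a nonzero polynomial of degree at most $d-1$ in $k+1$ affine variables, so by the elementary affine degree bound (a hypersurface of degree $e$ in $\mathbb{A}^{m}$ has at most $e\,q^{m-1}$ rational points, proved by induction on $m$) I get
$$\left| (X^n \cap \Pi)(\mathbb{F}_q) \setminus L(\mathbb{F}_q)\right| \leq (d-1)\,q^{k}.$$

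Finally, summing this uniform bound over the $N_q(\mathbb{P}^{n-k})$ slices and combining with the decomposition above yields $N_q(X^n) \leq N_q(\mathbb{P}^k) + (d-1)q^k N_q(\mathbb{P}^{n-k})$, which is the claim. The only genuinely substantive ingredient is the affine degree bound invoked in the last step; everything else is the combinatorial partition of space by the planes through $L$ and the factorization $F|_\Pi = \ell\,G$. I expect the point requiring the most care to be the verification that the dehomogenization of $G$ is not the zero polynomial and has degree at most $d-1$, since this is precisely what allows the affine bound to be applied slice-by-slice with the same constant $(d-1)q^k$ independent of $\Pi$.
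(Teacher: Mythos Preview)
Your argument is correct and follows the same strategy as the paper: partition $X^n(\mathbb{F}_q)$ via the $(k+1)$-planes $\Pi\in L^\nu$ through a maximal $L=\mathbb{P}^{k}\subseteq X^n$ and bound each slice by $(d-1)q^{k}$. The only cosmetic difference is that the paper obtains the slice estimate by quoting the Segre--Serre--S{\o}rensen bound $N_q(X^n\cap\Pi)\le dq^{k}+N_q(\mathbb{P}^{k-1})$ directly, whereas you derive the equivalent inequality by factoring $F|_\Pi=\ell\cdot G$ and applying the elementary affine bound to $G$; both routes are standard and yield the identical per-slice constant.
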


\begin{proof}
Consider an $\mathbb{F}_q$-linear subspaces $L$ of dimension $k_{X^n}$ contained in $X^n$. Then by  \cite{Ser} (see also \cite{Seg} and \cite{Sor}), we have
{\small
\begin{eqnarray*}
N_q(X^n) & = & \sum_{L'\in L^\nu}\left[N_q(X^n\cap L')-N_q(L)\right] + N_q(L)\\
& \leq & \left[ \left( dq^{k_{X^n}}+q^{k_{X^n}-1}+ \dots +1\right) - N_q(L)\right] \cdot N_q(\mathbb{P}^{n-k_{X^n}})+N_q(L)\\
& = & (d-1)q^{k_{X^n}}\cdot N_q(\mathbb{P}^{n-k_{X^n}})+N_q(L)\ ,
\end{eqnarray*}}
where $N_q(L)=N_q(\mathbb{P}^{k_{X^n}})=q^{k_{X^n}} + \dots + q+1$.
\end{proof}

\begin{rem}\label{rem}
{\em Fixing $d,q$ and $n$, the upper bound in Proposition \ref{prop1} increases with $k_{X^n}$. Thus, since $k_{X^n}\leq n$, from Proposition \ref{prop1} we deduce 
immediately the Segre--Serre--S{\o}rensen bound (see, \cite{Seg}, \cite{Ser} and \cite{Sor}). Moreover, if $X^n$ in $\mathbb{P}^{n+1}$ 
does not admit $\mathbb{F}_q$-linear components, then $k_{X^n}\leq n-1$ and Proposition \ref{prop1} gives the elementary Homma--Kim bound
(cf. \cite[Theorem 1.2]{HK4} and \cite[Remark 3.3]{HK7}).}
\end{rem}

As to the case $k_{X^n}=0$, i.e. when $X^n\subset\mathbb{P}^{n+1}$ is a hypersurface without $\mathbb{F}_q$-lines, 
with a technique different from the one used in Proposition \ref{prop1},
we can prove the following elementary upper bound.

\begin{prop}\label{prop2}
Let $X^n\subset\mathbb{P}^{n+1}$ be a hypersurface of degree $d\geq 2$ and dimension $n\geq 1$ defined over $\mathbb{F}_q$. 
Assume that $k_{X^n}=0$. Then $$N_q(X^n)\leq (d-1)q^n+(d-2)N_q(\mathbb{P}^{n-1})+1\ .$$
Moreover, if there exists a singular $\mathbb{F}_q$-point on $X^n$, then
$$N_q(X^n)\leq (d-2)q^n+(d-2)N_q(\mathbb{P}^{n-1})+1\ .$$
\end{prop}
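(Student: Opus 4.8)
The plan is to count $\mathbb{F}_q$-points of $X^n$ by fibering the projective space over a fixed $\mathbb{F}_q$-point, exploiting the hypothesis $k_{X^n}=0$, which guarantees that no $\mathbb{F}_q$-line lies entirely on $X^n$. Since $X^n$ is a hypersurface of degree $d$, any $\mathbb{F}_q$-line $\ell$ not contained in $X^n$ meets $X^n$ in at most $d$ points counted with multiplicity, hence in at most $d$ distinct $\mathbb{F}_q$-points; the condition $k_{X^n}=0$ ensures that \emph{every} $\mathbb{F}_q$-line falls into this case, so the crude bound $N_q(X^n\cap\ell)\leq d$ holds for all $\mathbb{F}_q$-lines.

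\medskip

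First I would fix an $\mathbb{F}_q$-point $P\in\mathbb{P}^{n+1}$, distinguishing the two cases according to whether $P$ lies on $X^n$ or not. The key combinatorial input is that the set of $\mathbb{F}_q$-lines through $P$ is parametrized by $\mathbb{P}^n$, with $N_q(\mathbb{P}^n)$ such lines, and every $\mathbb{F}_q$-point of $\mathbb{P}^{n+1}$ other than $P$ lies on exactly one of these lines. The natural choice is to take $P\notin X^n$, so that $X^n(\mathbb{F}_q)$ is partitioned (not just covered) by its intersections with the lines through $P$. Each such line $\ell$ is not contained in $X^n$ (since $P\notin X^n$), so $N_q(X^n\cap\ell)\leq d$, but one should sharpen this: because $\ell\not\ni$ any point of $X^n$ forcing a linear component, and more importantly the degree-$d$ restriction gives at most $d$ intersection points, summing over the $N_q(\mathbb{P}^n)=q^n+\cdots+1$ lines yields a first bound. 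To reach the stated estimate $(d-1)q^n+(d-2)N_q(\mathbb{P}^{n-1})+1$, I would refine the count by observing that the lines through $P$ meeting $X^n$ in the maximal number $d$ of points cannot be too numerous, and in particular I expect to split the lines according to how many $\mathbb{F}_q$-points they contribute, extracting one extra saving term of size $N_q(\mathbb{P}^{n-1})$ from a dimension-count on the directions that can achieve $d$ intersection points.

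\medskip

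For the second, stronger inequality under the hypothesis that $X^n$ has a singular $\mathbb{F}_q$-point, I would instead choose $P=P_0$ to be that singular point, so now $P_0\in X^n(\mathbb{F}_q)$. The point of singularity is that every $\mathbb{F}_q$-line through a singular point of a degree-$d$ hypersurface meets $X^n$ at $P_0$ with multiplicity at least $2$, so such a line can contain at most $d-1$ further $\mathbb{F}_q$-points of $X^n$ (again because $k_{X^n}=0$ prevents the line from lying on $X^n$). Partitioning $X^n(\mathbb{F}_q)\setminus\{P_0\}$ over the $N_q(\mathbb{P}^n)$ lines through $P_0$, each line contributes at most $d-1$ points away from $P_0$, and incorporating the same refinement as before for the lines achieving this maximum should produce the improved leading coefficient $(d-2)$ in place of $(d-1)$, with the additive $+1$ accounting for $P_0$ itself.

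\medskip

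The main obstacle I anticipate is the refinement step that produces the middle term $(d-2)N_q(\mathbb{P}^{n-1})$ rather than the naive $(d-1)N_q(\mathbb{P}^n)$ or $dN_q(\mathbb{P}^n)$: a direct line-by-line count gives only $N_q(X^n)\leq d\cdot N_q(\mathbb{P}^n)$ or similar, which is far too weak. The saving must come from controlling the lines that meet $X^n$ in exactly $d$ (respectively $d-1$) points. I would analyze the intersection $X^n\cap H$ with a suitable hyperplane $H$ through $P$, or equivalently project from $P$ to a $\mathbb{P}^n$ and bound the fiber cardinalities, using that the locus of "full-intersection" directions forms a lower-dimensional subvariety whose $\mathbb{F}_q$-points number at most $N_q(\mathbb{P}^{n-1})$. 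Making this dimension bound precise and matching it exactly to the claimed constants is the delicate part; everything else is a straightforward application of the degree bound $N_q(X^n\cap\ell)\leq d$ together with the partition of $\mathbb{P}^{n+1}(\mathbb{F}_q)$ by lines through the chosen center.
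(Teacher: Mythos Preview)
Your approach to the second (singular) inequality is essentially correct and matches the paper, modulo one slip: if $P_0$ is a singular $\mathbb{F}_q$-point, then every $\mathbb{F}_q$-line $\ell$ through $P_0$ satisfies $\operatorname{mult}_{P_0}(\ell\cap X^n)\geq 2$, so (since $k_{X^n}=0$ forces $\ell\not\subset X^n$) the line carries at most $d-2$ further $\mathbb{F}_q$-points of $X^n$, not $d-1$ as you wrote. With that correction the count is
\[
N_q(X^n)\leq 1+(d-2)\,N_q(\mathbb{P}^n)=(d-2)q^n+(d-2)N_q(\mathbb{P}^{n-1})+1,
\]
and no further ``refinement'' is needed.

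For the first inequality, however, there is a genuine gap. Choosing the center $P\notin X^n$ gives only $N_q(X^n)\leq d\,N_q(\mathbb{P}^n)$ line by line, and there is no canonical hyperplane of directions from an external point that would account for the saving term $(d-2)N_q(\mathbb{P}^{n-1})$; your proposed ``locus of full-intersection directions'' has no reason to have dimension $\leq n-1$ in general. The paper's idea---and the missing ingredient---is to take the center $p\in X^n(\mathbb{F}_q)$ (if $X^n(\mathbb{F}_q)=\emptyset$ the bound is trivial) and to split the pencil of lines through $p$ by the \emph{tangent hyperplane} $T_pX^n$. Lines $\ell\subset T_pX^n$ have $\operatorname{mult}_p(\ell\cap X^n)\geq 2$ and thus contribute at most $d-2$ points besides $p$; lines $\ell\not\subset T_pX^n$ contribute at most $d-1$. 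When $p$ is nonsingular, $T_pX^n\cong\mathbb{P}^n$, so there are exactly $N_q(\mathbb{P}^{n-1})$ tangent directions and $N_q(\mathbb{P}^n)-N_q(\mathbb{P}^{n-1})=q^n$ transversal ones, yielding
\[
N_q(X^n)\leq 1+(d-2)N_q(\mathbb{P}^{n-1})+(d-1)q^n
\]
directly. In short, the ``suitable hyperplane $H$ through $P$'' you were searching for is the tangent hyperplane, and it only exists once you place the center on $X^n$ rather than off it.
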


\begin{proof}
Let $p\in X(\mathbb{F}_q)$. Take an $\mathbb{F}_q$-linear subspace $L=\mathbb{P}^n$ such that $p\notin L$
and consider the $\mathbb{F}_q$-linear tangent space $T_pX$ of $X^n$ at the point $p$. Note that $\mathrm{mult}_p(l\cap X^n)\geq 2$
for every $\mathbb{F}_q$-line $l\subset T_pX$ passing through the point $p$. Define $L':=T_pX\cap L$ and observe that
$L'$ is an $\mathbb{F}_q$-linear subspace of $\mathbb{P}^{n+1}$ of dimension $n-1, $ or $n$, depending on whether p is a 
nonsingular or singular point, respectively. Then we get
\begin{eqnarray*}
N_q(X^n) & = & \sum_{l\in p^\nu}\left[N_q(X\cap l)-N_q(p\cap l)\right] +N_q(p)\\
& \leq & \sum_{l\in p^\nu\ :\ l\cap L'\neq\emptyset}\left(d-2\right) + \sum_{l\in p^\nu\ :\ l\cap L'=\emptyset}\left(d-1\right)+1\\
& \leq & (d-2)N_q(L')+(d-1)\left[N_q(L)-N_q(L')\right]+1\ .
\end{eqnarray*}
Suppose that $p$ is nonsingular for $X^n$. Then $L'=\mathbb{P}^{n-1}$ and 
\begin{eqnarray*}
N_q(X^n) & \leq & (d-2)N_q(L')+(d-1)\left[N_q(L)-N_q(L')\right]+1\\
& = & (d-1)q^n+(d-2)N_q(\mathbb{P}^{n-1})+1\ .
\end{eqnarray*}
On the other hand, if $p$ is singular for $X^n$, then $L'=L=\mathbb{P}^{n}$. Hence
$N_q(X^n)\leq (d-2)N_q(L)+1=(d-2)q^n+(d-2)N_q(\mathbb{P}^{n-1})+1$.
\end{proof}

\medskip

The above results allow us to give the following definition.

\begin{defn}\label{def}
\abovedisplayskip=0pt\relax
\[
\Theta_{n,k_{X^n}}^{d,q} :=
\begin{cases}
(d-1)q^{k_{X^n}}N_q(\mathbb{P}^{n-k_{X^n}})+N_q(\mathbb{P}^{k_{X^n}}) & \text{if } 0<k_{X^n}\leq n\\
(d-1)q^n+(d-2)N_q(\mathbb{P}^{n-1})+1 & \text{if } k_{X^n}=0
\end{cases}
\]
\end{defn}

\begin{rem}
{\em We have $\Theta_{n,k_{X^n}}^{d,q}\leq N_q(\mathbb{P}^{n+1})$ if and only if $d\leq q+1$; moreover, if $k_{X^n}>0$ then equality holds if and only if $d=q+1$.}
\end{rem}

Denoting by $\mathrm{Sing}(X^n)$ the set of singular points of $X^n$, let us give here a technical result which will be useful to prove Theorem \ref{thm1} (see also \cite[$\S 5$]{HK7}).

\begin{lem}\label{lemma}
Let $X^n\subset\mathbb{P}^{n+1}$ be a hypersurface of degree $d\geq 2$ and dimension $n\geq 1$ defined over $\mathbb{F}_q$. 
Assume that $k_{X^n}>0$ and $N_q(X^n)=\Theta_{n,k_{X^n}}^{d,q}$. Then we have the following properties:
\begin{enumerate}
\item for any point $p\in X^n(\mathbb{F}_q)$ there exists at least an $\mathbb{F}_q$-linear subspace $\mathbb{P}^{k_{X^n}}$ such that
$p\in\mathbb{P}^{k_{X^n}}\subseteq X^n$;
\item if $p\in\mathrm{Sing}(X^n)(\mathbb{F}_q)$, then $p\in\mathbb{P}^{k_{X^n}}$ for any $\mathbb{F}_q$-linear subspace $\mathbb{P}^{k_{X^n}}\subseteq X^n$;
\item if $p\in\mathrm{Sing}(X^n)(\mathbb{F}_q)$ and $d=\sqrt{q}+1$, then $X^n=p*X^{n-1}$, that is, $X^n$ is a cone over an $\mathbb{F}_q$-subvariety
$X^{n-1}$ of dimension $n-1$ and degree $\sqrt{q}+1$;
\item if $p\in X^n(\mathbb{F}_q)$ is a nonsingular point and $0<k_{X^n}\leq n-1$, then $$N_q(X^n\cap T_pX^n)=\Theta_{n-1,k_{X^n}}^{d,q}\ .$$
\end{enumerate}
\end{lem}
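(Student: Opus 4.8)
The plan is to prove the four properties of Lemma \ref{lemma} by exploiting the fact that equality in Proposition \ref{prop1} forces \emph{every} inequality used in its proof to be an equality. Recall that the proof of Proposition \ref{prop1} fixed a single $\mathbb{F}_q$-linear subspace $L=\mathbb{P}^{k_{X^n}}\subseteq X^n$ and summed the contributions over all $L'\in L^\nu$, using the Serre bound $N_q(X^n\cap L')\le dq^{k_{X^n}}+q^{k_{X^n}-1}+\dots+1$ on each linear slice $L'=\mathbb{P}^{k_{X^n}+1}$. When $N_q(X^n)=\Theta^{d,q}_{n,k_{X^n}}$, these bounds must be saturated simultaneously; this saturation is the engine behind all four claims.

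\medskip

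For (1) I would argue by contradiction: suppose some $p\in X^n(\mathbb{F}_q)$ lies on no $\mathbb{F}_q$-linear $\mathbb{P}^{k_{X^n}}\subseteq X^n$. The idea is to re-run the counting argument of Proposition \ref{prop1} but organizing the count around $p$ rather than around a fixed $L$, or equivalently to observe that the total count $\Theta^{d,q}_{n,k_{X^n}}$ equals the number of points lying on the $\mathbb{F}_q$-linear subspaces $\mathbb{P}^{k_{X^n}}\subseteq X^n$ plus the ``excess'' contributed by the degree; a point avoiding all such subspaces would be counted with strictly smaller multiplicity than the saturating estimate permits, contradicting equality. For (2), I would use that a singular $\mathbb{F}_q$-point $p$ forces $\mathrm{mult}_p(l\cap X^n)\ge 2$ for every $\mathbb{F}_q$-line through $p$, exactly as in the proof of Proposition \ref{prop2}; combined with (1), any $\mathbb{F}_q$-linear $\mathbb{P}^{k_{X^n}}\subseteq X^n$ not containing $p$ would span, together with $p$, extra structure that violates the tightness of the Serre bound on the relevant slices. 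The cleanest formulation is: if some $L=\mathbb{P}^{k_{X^n}}\subseteq X^n$ misses $p$, then projecting from $p$ or intersecting with a plane through $p$ and a line of $L$ produces a slice where the local multiplicity-$2$ condition at $p$ strictly lowers the point count below the saturating value.

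\medskip

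Claim (3) is the step I expect to be the main obstacle. Granting (2), every singular $\mathbb{F}_q$-point $p$ lies on all the $\mathbb{F}_q$-linear $\mathbb{P}^{k_{X^n}}$'s in $X^n$, which already suggests a cone structure with $p$ in the vertex. To upgrade ``$p$ lies on every maximal linear subspace'' to ``$X^n=p*X^{n-1}$'' I would show that for every point $x\in X^n(\mathbb{F}_q)$ the whole $\mathbb{F}_q$-line $\overline{px}$ lies in $X^n$. Here the hypothesis $d=\sqrt{q}+1$ enters decisively: the line $\overline{px}$ meets $X^n$ with multiplicity $\ge 2$ at $p$ (by singularity) and $\ge 1$ at $x$, and a counting/degree argument tied to the value $\sqrt{q}+1$ must rule out the line meeting $X^n$ in only finitely many points, forcing $\overline{px}\subseteq X^n$. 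The delicate part is controlling this line-by-line over \emph{all} of $X^n$, not just the $\mathbb{F}_q$-points, so that the cone conclusion holds scheme-theoretically and the residual variety $X^{n-1}$ genuinely has degree $\sqrt{q}+1$; I would handle this by intersecting with a hyperplane $\mathbb{P}^n$ not through $p$, identifying $X^{n-1}$ as $X^n\cap\mathbb{P}^n$, and verifying the cone relation via the saturated Serre bounds restricted to $2$-planes spanned by $p$ and lines in that hyperplane.

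\medskip

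Finally, for (4) I would take a nonsingular $p\in X^n(\mathbb{F}_q)$ and analyze the hyperplane section $X^n\cap T_pX^n$, which is a hypersurface of degree $d$ and dimension $n-1$ inside $T_pX^n\cong\mathbb{P}^n$. Using (1), $p$ lies on some $\mathbb{P}^{k_{X^n}}\subseteq X^n$, and since any line in $X^n$ through $p$ is tangent, this $\mathbb{P}^{k_{X^n}}$ is contained in $T_pX^n$, giving $k_{X^n\cap T_pX^n}\ge k_{X^n}$; combined with $0<k_{X^n}\le n-1$ this lets me apply Proposition \ref{prop1} to the section to get $N_q(X^n\cap T_pX^n)\le\Theta^{d,q}_{n-1,k_{X^n}}$. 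For the reverse inequality I would feed the value of $N_q(X^n\cap T_pX^n)$ back into the global count: writing $N_q(X^n)$ as a sum over the lines through $p$ exactly as in Proposition \ref{prop2}, the lines inside $T_pX^n$ contribute through $X^n\cap T_pX^n$ while the remaining lines each contribute the saturating amount $d-1$, and the assumed equality $N_q(X^n)=\Theta^{d,q}_{n,k_{X^n}}$ forces the tangent-slice count to attain exactly $\Theta^{d,q}_{n-1,k_{X^n}}$. The bookkeeping identity $\Theta^{d,q}_{n,k_{X^n}}=(d-1)q^n+\bigl(\Theta^{d,q}_{n-1,k_{X^n}}-(d-1)q^{n-1}\cdot\text{(correction)}\bigr)$ is routine to verify from the definition of $N_q(\mathbb{P}^N)$, and I expect this last step to be purely arithmetic once the geometric setup is in place.
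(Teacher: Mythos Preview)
Your proposal for (1) is on the right track but overly indirect. The clean argument is: given $p\in X^n(\mathbb{F}_q)$ and any fixed $L=\mathbb{P}^{k_{X^n}}\subseteq X^n$ not containing $p$, take the unique $L'=\langle p,L\rangle=\mathbb{P}^{k_{X^n}+1}\in L^\nu$; saturation of the Serre bound forces $X^n\cap L'$ to be a union of $d$ hyperplanes of $L'$, and $p$ lies on one of them. This is essentially what you say, but stated directly there is no need to ``re-run the counting around $p$.''

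There is a genuine gap in (2). Your idea of exploiting $\mathrm{mult}_p\ge 2$ on lines through a singular point does not by itself produce a contradiction with the saturated Serre count: the slice $X^n\cap\langle p,\mathbb{P}^{k_{X^n}}\rangle$ already attains the Serre maximum (it is a union of $d$ hyperplanes through a common $\mathbb{P}^{k_{X^n}-1}$), and nothing about the multiplicity of the lines from $p$ lowers that count. The paper's argument is different and relies on a fact you do not mention: if $p\notin\mathbb{P}^{k_{X^n}}$, then in that slice $p$ lies on exactly one of the $d$ hyperplanes and misses their common intersection $\mathbb{P}^{k_{X^n}-1}$, hence $p$ is a \emph{nonsingular} point of the slice $X^n\cap L'$. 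One then invokes the lemma (from \cite{HK7}) that a point nonsingular on a linear section of a hypersurface is nonsingular on the hypersurface itself, contradicting $p\in\mathrm{Sing}(X^n)$. Without this transfer-of-smoothness lemma your argument for (2) does not close.

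The gap in (3) is more serious. Using (1) and (2) you can indeed show that $X^n(\mathbb{F}_q)=\bigl(p*X^{n-1}\bigr)(\mathbb{F}_q)$ for $X^{n-1}=X^n\cap H$ with $H$ a hyperplane off $p$; this part is fine and is what the paper does. But your plan to upgrade this to the scheme-theoretic equality $X^n=p*X^{n-1}$ via ``Serre bounds on $2$-planes spanned by $p$ and lines in $H$'' is not a valid argument: matching $\mathbb{F}_q$-points on every such plane still only controls $\mathbb{F}_q$-points, not the full variety. The paper's route is different and is precisely where $d=\sqrt{q}+1$ is used: from $N_q(X^n)=qN_q(X^{n-1})+1$ one computes $N_q(X^{n-1})$ and checks the strict inequality
\[
N_q(X^{n-1})>(\deg X^{n-1}-1)q^{n-1}+N_q(\mathbb{P}^{n-2}),
\]
which holds exactly because $d-1=\sqrt{q}$; one then applies \cite[Proposition~2.8]{HK7}, a result stating that if a hypersurface contains a cone whose base has sufficiently many $\mathbb{F}_q$-points (above the Homma--Kim threshold), then the hypersurface equals that cone. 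You are missing both this numerical check and the external proposition that turns it into a scheme-theoretic conclusion.

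Your approach to (4) is correct and is a genuinely different route from the paper's. The paper computes $N_q(X^n\cap T_pX^n)$ \emph{directly} by restricting the saturated sum $\sum_{L'\in(\mathbb{P}^{k_{X^n}})^\nu}$ from the proof of Proposition~\ref{prop1} to those $L'\subset T_pX^n$; since each summand is already at its maximal value, the restricted sum evaluates immediately to $\Theta^{d,q}_{n-1,k_{X^n}}$. Your method instead bounds from above via Proposition~\ref{prop1} applied to the section (noting $k_{X^n\cap T_pX^n}=k_{X^n}$) and from below via the line count through $p$, using the identity $\Theta^{d,q}_{n,k_{X^n}}-(d-1)q^n=\Theta^{d,q}_{n-1,k_{X^n}}$. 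Both arguments work; the paper's is shorter because it avoids the separate upper and lower bounds.
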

\begin{proof}
$(1)$ Consider $p\in X^n(\mathbb{F}_q)$ and let $L=\mathbb{P}^{k_{X^n}}$ be an $\mathbb{F}_q$-linear subspace contained in $X^n$. If $p\in L$, then we are done. So, assume that $p\notin L$. Take an $\mathbb{F}_q$-linear subspace
$L'=\mathbb{P}^{k_{X^n}+1}$ such that $\{ p\}\cup\mathbb{P}^{k_{X^n}}\subset L'=\mathbb{P}^{k_{X^n}+1}$. Since $N_q(X^n)=\Theta_{n,k_{X^n}}^{d,q}$, from the 
proof of Proposition \ref{prop1} we deduce that $p\in\mathbb{P}^{k_{X^n}+1}\cap X^n=\cup_{i=1}^{d}\mathbb{P}_i^{k_{X^n}}$, i.e. $p\in\mathbb{P}_j^{k_{X^n}}\subseteq X^n$
for some $j\in\{1,\dots,d\}$.

\medskip

\noindent $(2)$ Let $p\in\mathrm{Sing}(X^n)(\mathbb{F}_q)$ and suppose that there exists an $\mathbb{F}_q$-linear subspace $\mathbb{P}^{k_{X^n}}\subseteq X^n$ which does not contain the point $p$.
Take $\mathbb{P}^{k_{X^n}+1}:=\langle p,\mathbb{P}^{k_{X^n}}\rangle$ the $\mathbb{F}_q$-linear subspace of $\mathbb{P}^{n+1}$ spanned by $p$ and $\mathbb{P}^{k_{X^n}}$. Note that $\mathbb{P}^{k_{X^n}+1}$ cannot be contained in $X^n$.
Since $N_q(X^n)=\Theta_{n,k_{X^n}}^{d,q}$, from the 
proof of Proposition \ref{prop1} we deduce that $p\in\mathbb{P}^{k_{X^n}+1}\cap X^n=\cup_{i=2}^{d}\mathbb{P}_i^{k_{X^n}}\cup\mathbb{P}^{k_{X^n}}$.
Thus $p\notin\mathbb{P}^{k_{X^n}}$ is not a singular point in $\mathbb{P}^{k_{X^n}+1}\cap X^n$ and by \cite[Lemma 2.6]{HK7} we conclude that $p$ is a nonsingular $\mathbb{F}_q$-point in $X^n$, 
but this is a contradiction. 

\medskip

\noindent $(3)$ Let $p\in\mathrm{Sing}(X^n)(\mathbb{F}_q)$ and consider an $\mathbb{F}_q$-linear subspace $L=\mathbb{P}^n\nsubseteq X^n$ which does not contain $p$.
Define $X^{n-1}:=X^n\cap L$. For any $q\in X^{n-1}(\mathbb{F}_q)\subseteq X^n(\mathbb{F}_q)$, by $(1)$ we see that there exists an $\mathbb{F}_q$-linear subspace
$\mathbb{P}^{k_{X^n}}\subset X^n$ such that $q\in\mathbb{P}^{k_{X^n}}$. Moreover, by $(2)$ we have also $p\in\mathbb{P}^{k_{X^n}}$ because $p\in\mathrm{Sing}(X^n)(\mathbb{F}_q)$.
Hence $\langle p,q\rangle\subseteq\mathbb{P}^{k_{X^n}}\subseteq X^n$. This shows that $p*X^{n-1}(\mathbb{F}_q)\subseteq X^n(\mathbb{F}_q)$. Now, let $p'\in X^n(\mathbb{F}_q)$ with $p'\neq p$.
From $(1)$ and $(2)$ we know that there exists an $\mathbb{F}_q$-linear subspace $\mathbb{P}^{k_{X^n}}$ such that $\langle p,p'\rangle\subseteq\mathbb{P}^{k_{X^n}}\subseteq X^n$.
Define $p'':=\langle p,p'\rangle\cap L$. Then $p''\in X^n\cap L=X^{n-1}$ and $p'\in\langle p,p''\rangle$. This gives $X^n(\mathbb{F}_q)\subseteq p*X^{n-1}(\mathbb{F}_q)$,
that is, $X^n(\mathbb{F}_q)=p*X^{n-1}(\mathbb{F}_q)$. Hence $N_q(X^n)=qN_q(X^{n-1})+1$ and this leads to
$$N_q(X^{n-1})=\frac{N_q(X^n)-1}{q}=\sqrt{q}q^{k_{X^n}-1}N_q(\mathbb{P}^{n-k_{X^n}})+N_q(\mathbb{P}^{k_{X^n}-1})\ .$$
Note that $k_{X^{n-1}}=k_{X^n}-1$, $\deg X^{n-1}=\deg X^n=\sqrt{q}+1$ and $\dim X^{n-1}=n-1$ by $(2)$ and the choice of $L$.
Thus
$$N_q(X^{n-1})>\left(\deg X^{n-1}-1\right)q^{n-1}+q^{n-2}+ \dots +q+1\ .$$
Since $N_q(X^{n-1})>\left(\deg X^{n-1}-1\right)q^{n-1}+N_q(\mathbb{P}^{n-2})$, $X^{n-1}\subseteq X^n$ and $(p*X^{n-1})(\mathbb{F}_q)\subseteq X^n$, from \cite[Proposition 2.8]{HK7} we conclude that 
$X^n=p*X^{n-1}$.

\medskip

\noindent $(4)$ Let $p\in X^n(\mathbb{F}_q)$ be a nonsingular point. Then by $(1)$ we know that there exists an $\mathbb{F}_q$-linear subspace $\mathbb{P}^{k_{X^n}}$
such that $p\in\mathbb{P}^{k_{X^n}}\subseteq X^n$. Thus $\mathbb{P}^{k_{X^n}}\subset T_pX^n$, where $T_pX^n=\mathbb{P}^{n}$ is the tangent $\mathbb{F}_q$-linear space of $X^n$ at $p$. 
Define $X^{n-1}:=X^n\cap T_pX^n$.
Hence
from the proof of Proposition \ref{prop1} it follows that
\begin{eqnarray*}
N_q(X^{n-1}) & = & \sum_{L\in\left({\mathbb{P}^{k_{X^n}}}\right)^\nu\ :\ L\subset T_pX^n}\left[N_q(X^n\cap L)-N_q(\mathbb{P}^{k_{X^n}})\right]+N_q(\mathbb{P}^{k_{X^n}})\\
& = & \left[dq^{k_{X^n}}+q^{k_{X^n}-1}+ \dots + 1 - N_q(\mathbb{P}^{k_{X^n}})\right]\cdot N_q(\mathbb{P}^{n-k_{X^n}-1})\\
& & \qquad +N_q(\mathbb{P}^{k_{X^n}})\\
& = & (d-1)q^{k_{X^n}}N_q(\mathbb{P}^{(n-1)-k_{X^n}})+N_q(\mathbb{P}^{k_{X^n}})\ .
\end{eqnarray*}

\noindent Since $k_{X^{n-1}}=k_{X^n}$, we obtain that $N_q(X^n\cap T_pX^n)=\Theta_{n-1,k_{X^n}}^{d,q}$.
\end{proof}

\section{Proof of Theorems \ref{thm1} and \ref{thm2}}\label{sec2}

In this section, by applying the previous results, we prove the two theorems stated in the Introduction. Finally, for the nonsingular case, an immediate consequence of them is given in Corollary \ref{corollary}. 

\bigskip

\noindent\textit{Proof of} Theorem \ref{thm2}. Assume that $k_{X^n}=0$ and note that the first part of the statement follows from Proposition \ref{prop2}. Thus, suppose that $N_q(X^n)=\Theta_{n,0}^{d,q}$. Then by \cite{H} we know that
$$\Theta_{n,0}^{d,q}=N_q(X^n)\leq (d-1)(q^n+1)+(d-2)\left(N_q(\mathbb{P}^{n-2})-1\right)\ .$$ This gives $(d-2)q^{n-1}\leq 0$, i.e. $d\leq 2$. Hence $d=2$, that is, $X^n$ is a quadric hypersurface, and $N_q(X^n)=\Theta_{n,0}^{2,q}=q^n+1$. Write $X^n:=\mathbb{P}^h*Q^{n-h-1}$, where $h\in\mathbb{Z}_{\geq -1}$ and $Q^{n-h-1}\subset\mathbb{P}^{n-h}$ is a nonsingular quadric hypersurface of dimension $n-h-1$. Then $N_q(X^n)=q^{h+1}N_q(Q^{n-h-1})+N_q(\mathbb{P}^h)$. Since up to projective equivalence $Q^{n-h-1}$ can be a parabolic, a hyperbolic or an elliptic quadric hypersurface, from \cite[Ch. $5$]{Hir} we deduce that
\abovedisplayskip=0pt\relax
\[
N_q(X^n)=
\begin{cases}
N_q(\mathbb{P}^{n}) & \text{if } n-h-1\ \text{is\ odd} \\ 
& \\
\frac{q^{h+1}\left(q^{\frac{n-h-1}{2}}\pm 1\right)\left(q^{\frac{n-h+1}{2}}\mp 1\right)}{q-1}+N_q(\mathbb{P}^h) & \text{if } n-h-1\ \text{is\ even}
\end{cases}
\]
By comparing the previous value $N_q(X^n)=q^n+1$ with the two above situations, we conclude that either $(n,h)=(1,-1)$ and $X^1$ is a nonsingular plane conic, or $(n,h)=(2,-1)$ and $X^2\subset\mathbb{P}^3$ is a nonsingular elliptic quadric surface. This concludes the proof of Theorem \ref{thm2}. \hfill $\square$

\bigskip

\noindent\textit{Proof of} Theorem \ref{thm1}. Suppose that $0<k_{X^n}\leq n$ and $N_q(X^n)=\Theta_{n,k_{X^n}}^{d,q}$. Note that the first part of the statement follows from Proposition \ref{prop1}. Moreover, if $k_{X^n}=n$ then by \cite{Ser} we can conclude. Thus, we can assume that $0<k_{X^n}\leq n-1$. From the proof of Proposition \ref{prop1} we deduce that there exists an $\mathbb{F}_q$-linear subspace $\overline{L}=\mathbb{P}^{k_{X^n}+1}\subset\mathbb{P}^{n+1}$ such that $X\cap\overline{L}=\cup_{i=1}^{d}\mathbb{P}_i^{k_{X^n}}$. By considering all the $\mathbb{P}^{k_{X^n}+2}$'s such that $\overline{L}\subset\mathbb{P}^{k_{X^n}+2}$ and $\mathbb{P}^{k_{X^n}+2}$ is an $\mathbb{F}_q$-linear subspace of $\mathbb{P}^{n+1}$, 
by \cite{HK4} we get
{\small
\begin{eqnarray*}
N_q(X^n) & = & \sum_{L\in\overline{L}^\nu}\left[ N_q(X^n\cap L)-N_q(X^n\cap\overline{L}) \right]+N_q(X^n\cap\overline{L})\\
& = & \left[\sum_{L\in\overline{L}^\nu} N_q(X^n\cap L)\right]-N_q(X^n\cap\overline{L})N_q(\mathbb{P}^{n-k_{X^n}-1})+N_q(X^n\cap\overline{L})\\
& \leq & \left[(d-1)q^{k_{X^n}+1}+dq^{k_{X^n}}+q^{k_{X^n}-1}+ \dots + 1\right]\cdot N_q(\mathbb{P}^{n-k_{X^n}-1})\\
& & \ -(dq^{k_{X^n}}+q^{k_{X^n}-1}+ \dots + 1)\left[N_q(\mathbb{P}^{n-k_{X^n}-1})-1\right]\\
& = & (d-1)q^{k_{X^n}}N_q(\mathbb{P}^{n-k_{X^n}})+N_q(\mathbb{P}^{k_{X^n}})\ ,
\end{eqnarray*}}

\noindent because $X\cap L\subset\mathbb{P}^{k_{X^n}+2}$ is an $\mathbb{F}_q$-hypersurface without linear $\mathbb{F}_q$-components for any $L\in\overline{L}^\nu$.
Since $N_q(X^n)=\Theta_{n,k_{X^n}}^{d,q}$, we see that $N_q(X\cap L)=(d-1)q^{k_{X^n}+1}+dq^{k_{X^n}}+q^{k_{X^n}-1}+ \dots + 1$ and from \cite{T} it follows that $d=\deg (X\cap L)\in\{2,\sqrt{q}+1,q+1\}$. We proceed now with a case-by-case analysis.

\medskip

Assume that $d=q+1$. Then by \cite[Proposition 14]{T} we know that 
$X^n$ is a space-filling hypersurface as in case $(1)$ of Theorem \ref{thm1}. 

\medskip

Suppose now that $d=2$. Write $X^n:=\mathbb{P}^h*Q^{n-h-1}$
for some $h\in\mathbb{Z}_{\geq -1}$, where
$Q^{n-h-1}\subset\mathbb{P}^{n-h}$ is a nonsingular quadric hypersurface. 
Note that 

\begin{equation}\label{d=2}
q^{h+1}N_q(Q^{n-h-1})+N_q(\mathbb{P}^h)=N_q(X^n)=q^{k_{X^n}}N_q(\mathbb{P}^{n-k_{X^n}})+N_q(\mathbb{P}^{k_{X^n}})\ .
\end{equation}

\noindent If $n-h$ is even, i.e. $n-h=2s$ for some $s\in\mathbb{Z}_{\geq 1}$, then $Q^{n-h-1}$ is a parabolic quadric hypersurface  
which contains $\mathbb{F}_q$-linear subspaces $\mathbb{P}^{\frac{n-h}{2}-1}$ of maximal dimension and such that $N_q(Q^{n-h-1})=N_q(\mathbb{P}^{n-h-1})$. Thus $k_{X^n}=\frac{n+h}{2}$ and by \eqref{d=2} we get
$$N_q(\mathbb{P}^n)=q^{\frac{n+h}{2}}N_q(\mathbb{P}^{n-\frac{n+h}{2}})+N_q(\mathbb{P}^{\frac{n+h}{2}})\ .$$
Hence $q^n+ \dots + 1=q^n+ \dots + q^{k_{X^n}+1} + 2q^{k_{X^n}} + q^{k_{X^n}-1} + \dots + 1$, but this gives a contradiction because $0< k_{X^n}\leq n-1$.
So, let $n-h=2s-1$ for some $s\in\mathbb{Z}_{\geq 1}$. Then $Q^{n-h-1}$ is either (i) a hyperbolic or (ii) an elliptic quadric hypersurface  
which contains $\mathbb{F}_q$-linear subspaces $\mathbb{P}^{m}$ of maximal dimension, where $m$ is either $\frac{n-h-1}{2}$ or $\frac{n-h-1}{2}-1$, respectively. Hence
we deduce that $k_{X^n}=\frac{n+h+1}{2}$ in case (i) and $k_{X^n}=\frac{n+h-1}{2}$ in case (ii). Thus from \eqref{d=2} it follows that
\begin{equation}\label{d=2 bis}
q^{h+1}N_q(Q^{n-h-1})+N_q(\mathbb{P}^h)=
\begin{cases}
q^{\frac{n+h+1}{2}}N_q(\mathbb{P}^{n-\frac{n+h+1}{2}})+N_q(\mathbb{P}^{\frac{n+h+1}{2}})  & \text{(i)\ , } \\ 
q^{\frac{n+h-1}{2}}N_q(\mathbb{P}^{n-\frac{n+h-1}{2}})+N_q(\mathbb{P}^{\frac{n+h-1}{2}})  & \text{(ii)\ . }
\end{cases} 
\end{equation}
If $Q^{n-h-1}$ is a hyperbolic quadric hypersurface, then 
$N_q(Q^{n-h-1})=\left(q^{\frac{n-h-1}{2}}+1\right)\cdot N_q(\mathbb{P}^{\frac{n-h-1}{2}})$ and \eqref{d=2 bis} becomes an identity.
So, case (i) occurs for any $h$ such that $-1\leq h\leq n-1$. On the other hand, if $Q^{n-h-1}$ is as in case (ii), then 
$N_q(Q^{n-h-1})=\left(q^{\frac{n-h-1}{2}+1}+1\right)\cdot N_q(\mathbb{P}^{\frac{n-h-1}{2}-1})$ and \eqref{d=2 bis} gives
$q^{\frac{n+h+1}{2}}+q^{\frac{n+h-1}{2}}=0$, which is clearly a numerical contradiction because $q\geq 2$ and $\frac{n+h+1}{2}>0$.
This proves case $(3)$ in Theorem \ref{thm1}.

\medskip

Finally, assume that $d=\sqrt{q}+1$. Denote by $\mathcal{H}_h$ the two statements $(a)$ and $(b)$ as in case $(2)$ of Theorem \ref{thm1}. We will prove $\mathcal{H}_h$
by induction on $h\in\mathbb{Z}_{\geq 1}$, so $h=1$ is the first step of the induction. 

If $X^2\subset\mathbb{P}^3$ is a surface of
degree $\sqrt{q}+1$ with $N_q(X^2)=\Theta_{2,k_{X^n}}^{\sqrt{q}+1,q}$, then from \cite{HK5} we know that $X^2$ is a nonsingular Hermitian surface 
in $\mathbb{P}^3$. This shows $(a)$ of case $(2)$ for $h=1$. Let $X^3\subset\mathbb{P}^4$ be a hypersurface of
degree $\sqrt{q}+1$ with $N_q(X^3)=\Theta_{3,k_{X^n}}^{\sqrt{q}+1,q}$. If $\mathrm{Sing}(X^{3})(\mathbb{F}_q)=\emptyset$, then
there exists a point $p\in X^3(\mathbb{F}_q)$ such that $X^2:=X^3\cap T_pX^3\subset\mathbb{P}^3$ is a surface in $\mathbb{P}^3$ singular at $p$.
By Lemma \ref{lemma} $(4)$ we see that $N_q(X^2)=N_q(X^3\cap T_pX^3)=\Theta_{2,k_{X^n}}^{\sqrt{q}+1,q}$ because $p$ is a nonsingular $\mathbb{F}_q$-point.
Thus, if $k_{X^3}=2$ then from \cite{T} we deduce that $X^3$ is a cone over a nonsingular Hermitian surface with vertex an $\mathbb{F}_q$-point, a contradiction.
If $k_{X^3}\leq 1$, then $k_{X^3}=1$ and since $N_q(X^2)=\Theta_{2,k_{X^n}}^{\sqrt{q}+1,q}$ by \cite{HK5} we deduce that $X^2$ is a nonsingular Hermitian surface,
which gives again a contradiction. So, suppose that $\mathrm{Sing}(X^{3})(\mathbb{F}_q)\neq\emptyset$. Then by Lemma \ref{lemma} $(3)$ we have
$X^3=p*X^2$ where $X^2$ is as in $(a)$ of $\mathcal{H}_1$. This shows that $X^3$ is a cone over a nonsingular Hermitian $\mathbb{F}_q$-surface, i.e. $(b)$ of case $(2)$ for $h=1$ is true.
This completes the proof of the statement $\mathcal{H}_h$ for $h=1$. 

Assume now that $\mathcal{H}_h$ is true for some $h\in\mathbb{Z}_{\geq 1}$.
Let $n=2(h+1)$ with $h\in\mathbb{Z}_{\geq 1}$ and $1\leq k_{X^{2(h+1)}}\leq 2(h+1)-1$. First, suppose that $\mathrm{Sing}(X^{2(h+1)})(\mathbb{F}_q)=\emptyset$
and let $p\in X^{2(h+1)}(\mathbb{F}_q)$ be a nonsingular point. Define $X^{2h+1}:=X^{2(h+1)}\cap T_pX^{2(h+1)}$ and note that $p\in\mathbb{P}^{k_{X^{2(h+1)}}}\subset X^{2h+1}$ for some $\mathbb{F}_q$-linear subspace $\mathbb{P}^{k_{X^{2(h+1)}}}\subseteq X^{2(h+1)}$ (see \cite[Proposition 5.9 (i)]{HK7}).  
Hence $k_{X^{2h+1}}=k_{X^{2(h+1)}}$. Since by Lemma \ref{lemma} $(4)$ we have $N_q(X^{2(h+1)})>N_q(X^{2h+1})$,  let $p'\notin T_pX^{2(h+1)}$ be an $\mathbb{F}_q$-point of $X^{2(h+1)}$ and consider the tangent $\mathbb{F}_q$-linear space 
$T_{p'}X^{2(h+1)}=\mathbb{P}^{2(h+1)}$. Define also $X'^{2h+1}:=X^{2(h+1)}\cap T_{p'}X^{2(h+1)}$ and observe that $k_{X'^{2h+1}}=k_{X^{2(h+1)}}$. Note that $k_{X^{2(h+1)}}\leq 2h=\dim X^{2h+1}-1$, otherwise by \cite{T} we would get 
$\mathrm{Sing}(X^{2(h+1)})(\mathbb{F}_q)\neq\emptyset$. Thus $k_{X^{2h+1}}\leq 2h$ and by Lemma \ref{lemma} $(4)$ and the induction hypothesis, we see that $X^{2h+1}=\mathbb{P}^l*X^{2h-l}$ for some
$l\in\mathbb{Z}_{\geq 0}$. If $l>0$ then the $\mathbb{F}_q$-linear subspace $\mathbb{P}^l$ intersect $X'^{2h+1}$ at least at one $\mathbb{F}_q$-point $\overline{p}$.
Since $X'^{2h+1}$ is singular at $p'$, by Lemma \ref{lemma} $(1)$, $(2)$ and $(4)$ there is an $\mathbb{F}_q$-linear subspace $\mathbb{P}^{k_{X^{2(h+1)}}}$
which contains $p'$ and $\overline{p}$. Thus there exists an $\mathbb{F}_q$-line $L:=\langle p',\overline{p}\rangle$ in $X^{2(h+1)}$.
Since $T_{\overline{p}}X^{2(h+1)}$ contains $T_{\overline{p}}X^{2h+1}=\mathbb{P}^{2(h+1)}$ because $\overline{p}\in\mathrm{Sing}(X^{2h+1})(\mathbb{F}_q)$ and the line $L\subsetneq T_{\overline{p}}X^{2h+1}$, we conclude that $T_{\overline{p}}X^{2h+1}\cup L\subseteq T_{\overline{p}}X^{2(h+1)}$, i.e.
$T_{\overline{p}}X^{2(h+1)}=\mathbb{P}^{2(h+1)+1}$. Hence $\overline{p}$ is a singular $\mathbb{F}_q$-point of $X^{2(h+1)}$, but this is a contradiction.
Therefore, $X^{2h+1}=p*X_H^{2h}$ and this gives $k_{X^{2(h+1)}}=k_{X^{2h+1}}=h+1=\frac{\dim X^{2(h+1)}}{2}$. So, by \cite[Theorem 6.3]{HK7}
we conclude that $X^{2(h+1)}$ is a nonsingular Hermitian $\mathbb{F}_q$-hypersurface. 

Suppose now that $\mathrm{Sing}(X^{2(h+1)})(\mathbb{F}_q)\neq\emptyset$. By Lemma \ref{lemma} $(3)$ we know that $X^{2(h+1)}=p*X^{2h+1}$ for some $p\in\mathrm{Sing}(X^{2(h+1)})(\mathbb{F}_q)$.
Moreover, we have $\deg X^{2h+1}=\sqrt{q}+1$, $k_{X^{2h+1}}=k_{X^{2(h+1)}}-1$ and 

$$N_q(X^{2h+1})=\frac{\Theta_{n,k_{X^{2(h+1)}}}^{\sqrt{q}+1,q}-1}{q}=\Theta_{n-1,k_{X^{2h+1}}}^{\sqrt{q}+1,q} \ .$$

\noindent Thus, by induction, we get the statement $(a)$ of $\mathcal{H}_{h+1}$. Now, let $n=2(h+1)+1$ for some $h\in\mathbb{Z}_{\geq 1}$ and $1\leq k_{X^{2(h+1)+1}}\leq 2(h+1)$. 
Suppose that $\mathrm{Sing}(X^{2(h+1)+1})(\mathbb{F}_q)=\emptyset$
and let $p\in X^{2(h+1)+1}(\mathbb{F}_q)$ be a nonsingular point. Consider $X^{2(h+1)}:=X^{2(h+1)+1}\cap T_pX^{2(h+1)+1}$ and note that $p\in\mathbb{P}^{k_{X^{2(h+1)+1}}}\subset X^{2(h+1)}$ for some $\mathbb{F}_q$-linear subspace $\mathbb{P}^{k_{X^{2(h+1)+1}}}$. 
Hence $k_{X^{2(h+1)}}=k_{X^{2(h+1)+1}}$. Moreover, since $p$ is a singular $\mathbb{F}_q$-point of $X^{2(h+1)}$,  by the part $(a)$ of $\mathcal{H}_{h+1}$ and Lemma \ref{lemma} $(4)$ we obtain that $X^{2(h+1)}=\mathbb{P}^l*X_H^{2h+1-l}$
for some $l\in\mathbb{Z}_{\geq 1}$. By considering another nonsingular $\mathbb{F}_q$-point $p'$ of $X^{2(h+1)}$ not lying on $T_pX^{2(h+1)+1}$,
by arguing as above in the previous case, we conclude that $\mathbb{P}^l\cap T_{p'}X^{2(h+1)+1}$ gives at least a
singular $\mathbb{F}_q$-point $\overline{p}$ of $X^{2(h+1)+1}$, which is a contradiction. 

Finally, assume that $\mathrm{Sing}(X^{2(h+1)+1})(\mathbb{F}_q)\neq\emptyset$.
Then from Lemma \ref{lemma} $(3)$ it follows that $X^{2(h+1)+1}=p*X^{2(h+1)}$ for some $\mathbb{F}_q$-point $p$ on $X^{2(h+1)+1}$. By applying the part $(a)$ of $\mathcal{H}_{h+1}$ to $X^{2(h+1)}$, we obtain the statement $(b)$ of $\mathcal{H}_{h+1}$. This proves that $\mathcal{H}_{h+1}$ is true whenever $\mathcal{H}_{h}$ is true. This shows that the statement $\mathcal{H}_{h}$
is true for any $h\in\mathbb{Z}_{\geq 1}$, concluding the proof of Theorem~\ref{thm1}. \hfill $\square$

\bigskip

Finally, in the nonsingular case, from Theorems \ref{thm1} and \ref{thm2} one can deduce immediately the following result.

\begin{cor}\label{corollary}
Let $X^n\subset\mathbb{P}^{n+1}$ be a nonsingular hypersurface of degree $d\geq 2$ and dimension $n\geq 1$ defined over $\mathbb{F}_q$. Then
$N_q(X^n)\leq \Theta_{n,k_{X^n}}^{d,q}$
and equality holds if and only if one of the following possibilities occurs:
\begin{enumerate}
\item[$(1)$] $n=1$, $d=2$, $k_{X^1}=0$ and $X^1$ is projectively equivalent to the plane conic
$$X_0^2+X_1^2+X_2^2=0\ ;$$
\item[$(2)$] $n=2$, $d=2$, $k_{X^2}=0$ and $X^2$ is projectively equivalent to an elliptic surface
$$\alpha X_0^2+X_0X_1+X_1^2=0$$
with $\alpha\in\{ t\in\mathbb{F}_q\ |\ t+t^2+t^4+ \dots +t^{2^{r-1}}=1 \}$ if $q=2^r$ for some $r\in\mathbb{Z}_{\geq 1}$ and 
such that $1-4\alpha $ is a non-square if $q$ is odd\ ;
\item[$(3)$] $n\geq 2$ is even, $k_{X^n}=\frac{n}{2}$ and one of the following cases holds:
\begin{enumerate}
\item[$(a)$] $d=q+1$ and $X^n$ is a space-filling hypersurface

$$(X_0,\ldots,X_{n+1})\ A\ {}^t \! (X_0^q,\ldots,X_{n+1}^q)=0,$$ 
where $A=\left( a_{ij}\right)_{i,j=1,\ldots,n+2}$ is an
$(n+2)\times (n+2)$ matrix such that ${}^t \! A=-A$, $a_{kk}=0$ for every $k=1,\ldots,n+2$ and $\det A\neq 0$\ ;
\item[$(b)$] $d=\sqrt{q}+1$ and $X^{n}$ is projectively equivalent to a 
nonsingular Hermitian hypersurface
$$X_0^{\sqrt{q}+1}+X_1^{\sqrt{q}+1}+ \dots + X_{n+1}^{\sqrt{q}+1}=0\ ;$$
\item[$(c)$] $d=2$ and $X^n$ is projectively equivalent to the hyperbolic quadric hypersurface 
$$X_0X_1+X_2X_3+ \dots + X_{n}X_{n+1}=0\ .$$
\end{enumerate}
\end{enumerate}
\end{cor}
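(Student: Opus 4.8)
The plan is to read the corollary directly off Theorems~\ref{thm1} and~\ref{thm2}, keeping only those equality cases in which $X^n$ happens to be nonsingular and recording the value of $k_{X^n}$ for each survivor. The inequality $N_q(X^n)\leq\Theta_{n,k_{X^n}}^{d,q}$ requires no separate work: it is exactly Proposition~\ref{prop1} when $k_{X^n}>0$ and Proposition~\ref{prop2} when $k_{X^n}=0$, combined with Definition~\ref{def}, and nonsingularity plays no role in it.

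For the equality statement I would treat $k_{X^n}=0$ and $k_{X^n}>0$ separately. If $k_{X^n}=0$, then Theorem~\ref{thm2} already produces only the nonsingular plane conic ($n=1$, $d=2$) and the nonsingular elliptic surface ($n=2$, $d=2$), which are precisely items (1) and (2), so nothing has to be removed. If $k_{X^n}>0$, I would run through the list of Theorem~\ref{thm1} and delete every singular entry: case~(I) is a union of $d\geq 2$ hyperplanes sharing a common $\mathbb{F}_q$-linear $\mathbb{P}^{n-1}$, hence singular along that $\mathbb{P}^{n-1}$; in (II)(2)(a)(ii) and (II)(2)(b) every entry is a cone $\mathbb{P}^l*X_H^m$ with nonempty vertex $\mathbb{P}^l$, $l\geq 0$; and in (II)(3) the cone $\mathbb{P}^h*Q^{n-h-1}$ is singular exactly when its vertex is nonempty, i.e.\ when $h\geq 0$. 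All of these are discarded.

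It then remains to identify the three surviving families and to check that each forces $n$ even with $k_{X^n}=n/2$. For Theorem~\ref{thm1}(II)(1) nonsingularity means $\det A\neq 0$; since $A$ is alternating of size $(n+2)\times(n+2)$ this forces $n+2$ to be even, so $n$ is even, and $k_{X^n}=n/2$, giving (3a). Theorem~\ref{thm1}(II)(2)(a)(i) contributes the nonsingular Hermitian hypersurfaces, which occur only for even $n=2h$ with $k_{X^n}=h=n/2$, giving (3b). Finally, the only nonsingular member of (II)(3) comes from the empty vertex $h=-1$, leaving the bare quadric $Q^n\subset\mathbb{P}^{n+1}$; here $k_{X^n}=\tfrac{n+(-1)+1}{2}=n/2$, and $k_{X^n}>0$ forces $n\geq 2$ even, producing the hyperbolic quadric of (3c). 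Assembling these is the whole statement.

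The one place that needs a genuine, if short, verification beyond reading off Theorem~\ref{thm1} is fixing $k_{X^n}=n/2$ in the space-filling case~(3a): the theorem records only $0<k_{X^n}\leq n-1$, and since the Remark after Definition~\ref{def} gives $\Theta_{n,k}^{q+1,q}=N_q(\mathbb{P}^{n+1})$ for every $k>0$, the point count cannot distinguish the value of $k_{X^n}$. Here I would observe that a linear subspace lies in $X^n$ exactly when it is totally isotropic for the nondegenerate alternating form attached to $A$—the diagonal and off-diagonal coefficients of the restricted equation force every pair of spanning vectors to be $A$-orthogonal—so the maximal such subspace is a Lagrangian of projective dimension $n/2$, as in \cite{T}. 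A related remark, not needed for the equivalence, is that the odd-dimensional nonsingular Hermitian hypersurfaces are genuinely absent: their point count falls short of $\Theta_{2h+1,h}^{\sqrt q+1,q}$ by exactly $(\sqrt q)^{2h+1}$, which explains why they do not occur among the cones of Theorem~\ref{thm1}(II)(2)(b).
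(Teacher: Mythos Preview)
Your proposal is correct and follows exactly the route the paper intends: the corollary is stated there as an immediate consequence of Theorems~\ref{thm1} and~\ref{thm2} with no written proof, and your plan of filtering those two lists by nonsingularity is the natural (and only reasonable) way to carry this out. The one genuine addition you supply is the verification that $k_{X^n}=n/2$ in the nonsingular space-filling case~(3a), via the identification of $\mathbb{F}_q$-linear subspaces of $X^n$ with totally isotropic subspaces for the nondegenerate alternating form; the paper leaves this implicit (presumably absorbed into the reference~\cite{T}), and your observation that $\Theta_{n,k}^{q+1,q}=N_q(\mathbb{P}^{n+1})$ for every $k>0$, so the point count alone cannot pin down $k_{X^n}$, is a nice clarification of why a separate argument is needed there.
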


\bigskip

\bigskip

\end{document}